\theoremstyle{plain}
\newtheorem{theorem}{Theorem}
\newtheorem{remark}[theorem]{Remark}
\newtheorem{proposition}[theorem]{Proposition}
\newtheorem{definition}[theorem]{Definition}
\newcommand{\ii}{\mathrm{i}}
\title{
Single-input perturbative control of a quantum symmetric rotor}
\begin{document}
\author{Thomas Chambrion and Eugenio Pozzoli\footnote{The authors are with Institut de Mathématiques de Bourgogne,
 UMR 5584,
 CNRS, Université Bourgogne Franche-Comté, F-21000 Dijon, France. E-mails:
     thomas.chambrion@u-bourgogne.fr, eugenio.pozzoli@u-bourgogne.fr.}}
\maketitle
\begin{abstract}
We consider the Schr\"odinger partial differential equation of a rotating symmetric rigid molecule (symmetric rotor) driven by a z-linearly polarized electric field, as prototype of degenerate infinite-dimensional bilinear control system. By introducing an abstract perturbative criterium, we classify its simultaneous approximate controllability; based on this insight, we numerically perform an orientational selective transfer of rotational population.
\end{abstract}

\section{Introduction}

\subsection{Physical model}
The attitude of a rigid body is a point in the Lie group of 
rotations ${\rm SO}(3)$, parametrized by the Euler's angles $(\alpha,\beta,\gamma)\in[0,2\pi)
\times[0,\pi]\times [0,2\pi)$.  At the quantum level, the state of the system is decribed by 
 the so-called \emph{wave function} $\psi:SO(3)\to \mathbf{C}$ whose square modulus $|\psi|^2$ can be 
 interpreted as a probability density. Throughout the paper, we use the Haar volume of $SO(3)$, $
 {\rm vol}_{\rm Haar}=\frac{1}{8}d\alpha d\gamma \sin(\beta) d\beta$ in Euler coordinates, as 
 reference measure without further notice and we require that $\psi$ belongs to the unit sphere of  the set $L^2({\rm 
 SO}(3))$ of  square integrable (for the Haar volume) complex functions on $SO(3)$, equipped with its natural $L^2$ norm. 
 
 When submitted to an external  $z$-linearly polarized electric field of (variable) real intensity $u$, the 
 dynamics of the wave function $\psi$ is given by the bilinear Schrödinger equation  
\begin{equation}\label{eq:top}
\ii\dot{\psi}=(H_{\rm rot}+u H_z)\psi, \quad \psi\in L^2({\rm SO}(3)),
\end{equation}
where  $H_z=-\delta \cos(\beta)$ is the interaction Hamiltonian between the $z$-polarization of 
the electric field and the electric dipole moment $\delta>0$ along the symmetry axis, 
\begin{align}\label{EQ_def_Hrot}
H_{\rm rot}&=-2A\left[\frac{1}{\sin(\beta)}\frac{\partial}{\partial \beta}\left(\sin(\beta)
\frac{\partial}{\partial \beta}\right)+\frac{1}{\sin^2(\beta)}\left(\frac{\partial^2}{\partial 
\alpha^2}  +\frac{\partial^2}{\partial \gamma^2} -2\cos(\beta)\frac{\partial^2}{\partial 
\alpha\partial \gamma}\right)\right]\nonumber \\ &+ \left(A-C\right)\frac{\partial^2}{\partial \gamma^2}, 
\end{align}
is the (essentially self-adjoint) rotational Hamiltonian, and $A,C>0$ are the rotational constants. 

Since the linear operator $H_z:L^2({\rm SO}(3))\to L^2({\rm SO}(3)) $ is bounded, standard arguments (see for instance \cite[Theorem 2.5]{BMS}) guarantee the well-posedness of \eqref{eq:top} for every locally integrable control function $u$. We denote with $(u,t)\mapsto \Upsilon^u_{t}$ the propagator at time $t$ of \eqref{eq:top}, i.e., for every $t$, the solution $\psi(t)$ at time $t$ of \eqref{eq:top} satisfies $\psi(t)=\Upsilon^u_t(\psi(0))$. 

An important question 
is the controllability of the above system \eqref{eq:top}, that is the possibility to chose a suitable 
(time variable) $u:[0,T]\to \mathbf{R}$ that drives the system from a known given state $
\psi(0)=\psi_0$ to (or close enough to) a given target $\psi(T)=\psi_{1}$.

\subsection{Contribution and main results}
The contribution of this paper is a characterization of the 
approximate controllability of the system \eqref{eq:top}. Our main result is the following.
\begin{theorem}\label{THE_main}
\begin{itemize}
\item[(i)] There exists a countable family $(\mathcal{H}_{n})_{n\in \mathbf{N}}$ of orthogonal closed infinite dimensional subspaces of $L^2({\rm SO}(3))$ such that, for every $T>0$, for every $u$ in $L^1([0,T],\mathbf{R})$, for every $n$ in $\mathbf{N}$, $\Upsilon^u_{T} (\mathcal{H}_n) \subset \mathcal{H}_n$. Moreover, also the orthogonal complement $\mathcal{G}$ in $L^2({\rm SO}(3))$ of 
$\mathcal{H}:=\displaystyle {{\bigoplus}_{n\in \mathbf{N}}} \mathcal{H}_{n}$ is invariant for the propagators of \eqref{eq:top}, and the dynamics in $\mathcal{G}$ are completely determined by the dynamics in $\mathcal{H}$.

\item[(ii)] Denoting with $p_n$ the orthogonal projection of $L^2({\rm SO}(3))$ onto $\mathcal{H}_n$, for every $\epsilon>0$, and every $\psi_0$, $\psi_1$ in $\mathcal{H}$
such that $\|p_n(\psi_0)\|=\|p_n(\psi_1)\|$ for all $n\in\mathbf{N}$, there exist $T>0$ and $u\in L^1([0,T],\mathbf{R})$ such that $\|\Upsilon^u_T(\psi_0)-\psi_1 \|<\epsilon$. 
\end{itemize}
\end{theorem}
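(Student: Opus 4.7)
The plan is to work in the orthonormal Peter--Weyl basis $\{D^j_{m,k}\}_{j\in\mathbb{N},\,-j\leq m,k\leq j}$ of $L^2({\rm SO}(3))$ formed by the Wigner $D$-matrix elements. A direct computation---essentially rewriting $H_{\rm rot}$ in terms of the standard angular-momentum operators on ${\rm SO}(3)$---shows that $H_{\rm rot}D^j_{m,k}=\lambda_{j,k}D^j_{m,k}$ with $\lambda_{j,k}=2A\,j(j+1)+(A-C)k^2$. Since $\cos\beta$ is proportional to the Wigner function $D^1_{00}$, the Clebsch--Gordan decomposition of $D^1_{00}\cdot D^j_{m,k}$ gives the selection rules $\Delta m=\Delta k=0$, $\Delta j\in\{-1,0,+1\}$, so the closed subspace $V_{m,k}:=\overline{\mathrm{span}}\{D^j_{m,k}:j\geq\max(|m|,|k|)\}$ is preserved by $H_{\rm rot}$ and $H_z$, hence by every propagator $\Upsilon^u_t$.

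To extract the family $(\mathcal{H}_n)$ and the residual subspace $\mathcal{G}$, I would exploit the discrete $k\mapsto -k$ symmetry: since $\lambda_{j,k}=\lambda_{j,-k}$ and a standard Clebsch--Gordan identity gives $\langle D^{j+1}_{m,-k}|\cos\beta|D^j_{m,-k}\rangle=\langle D^{j+1}_{m,k}|\cos\beta|D^j_{m,k}\rangle$, the unitary $D^j_{m,k}\mapsto D^j_{m,-k}$ conjugates the dynamics on $V_{m,k}$ to those on $V_{m,-k}$. I would then let $\mathcal{H}_n$ range over the subspaces $V_{m,k}$ with $k\geq 0$ (a countable orthogonal family of infinite-dimensional invariant subspaces) and set $\mathcal{G}:=\bigoplus_{m,\,k>0}V_{m,-k}$, so that the intertwining immediately yields invariance of $\mathcal{G}$ and rigidly determines its dynamics from those on $\mathcal{H}$, settling (i).

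For part (ii), the restriction of \eqref{eq:top} to each $V_{m,k}$ is a tridiagonal bilinear Schr\"odinger equation with spectrum $(\lambda_{j,k})_j$ and real couplings $\mu^{m,k}_j:=\langle D^{j+1}_{m,k}|\cos\beta|D^j_{m,k}\rangle$. A Galerkin-based approximate controllability result applies to each such subsystem in isolation once one checks the non-vanishing of the $\mu^{m,k}_j$ and the non-resonance of the spectral gaps $\lambda_{j+1,k}-\lambda_{j,k}=4A(j+1)$. The genuine obstacle---and the role I expect the ``abstract perturbative criterion'' announced in the abstract to play---is \emph{simultaneity}: a single $u$ must drive all the $\mathcal{H}_n$ at once, and distinct subspaces $V_{m,k},V_{m',k}$ with the same $k$ share identical spectra, so any resonant pulse acts on both with amplitudes dictated by the $m$-dependent Clebsch--Gordan factors.

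I would therefore reduce the problem, via a Galerkin cut-off, to establishing density of the reachable set in the product $\prod_n \mathrm{U}(\mathcal{H}_n^{N})$ of finite-dimensional unitary groups on each slice---this product structure encoding precisely the constraint $\|p_n(\psi_0)\|=\|p_n(\psi_1)\|$---and then use explicit Clebsch--Gordan ratios together with a rational-independence/Kronecker-style density argument to produce the required density. Verifying this separation of coupling ratios at every resonance frequency, uniformly enough to control the Galerkin truncation error as $N\to\infty$, is the step I anticipate will be the hardest and will absorb the bulk of the technical work.
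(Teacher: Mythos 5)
Your Peter--Weyl decomposition into the invariant subspaces $V_{m,k}$ and the tri-diagonal selection rules both match the paper, but the intertwining you propose for part~(i) is wrong, and your sketch for part~(ii) is missing the paper's key mechanism.

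\textbf{Part (i).} You claim that the unitary $D^j_{m,k}\mapsto D^j_{m,-k}$ conjugates the dynamics on $V_{m,k}$ to those on $V_{m,-k}$, but you only checked the off-diagonal matrix elements of $\cos\beta$. The interaction Hamiltonian $H_z=-\delta\cos\beta$ also has nonvanishing \emph{diagonal} matrix elements $\langle D^j_{m,k},\cos\beta\,D^j_{m,k}\rangle=\tfrac{mk}{j(j+1)}$, and these change sign under $k\to-k$. Since an intertwiner must conjugate $H_z$ as a whole (its action must hold for all $u$), this map does \emph{not} relate the two dynamics; in fact the paper's Theorem~\ref{thm:top}(b) implies that $V_{m,k}$ and $V_{m,-k}$ are simultaneously approximately controllable whenever $mk\neq 0$, so they cannot be rigidly linked. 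The symmetries that do work are $(k,m)\mapsto(-k,-m)$, $(k,m)\mapsto(m,k)$ and $(k,m)\mapsto(-m,-k)$ --- a Klein four-group whose every element preserves the product $km$, hence the diagonal of $H_z$; the two swaps also shift the spectrum by the constant $(A-C)(k^2-m^2)$, which is why the paper's intertwiners carry a time-dependent phase $e^{\ii t(A-C)(k^2-m^2)}$. A correct fundamental domain is $\mathcal{N}=\{(k,m)\in\mathbf{Z}\times\mathbf{N}:|k|\leq m\}$. Your choice ``all $V_{m,k}$ with $k\geq 0$'' is not a fundamental domain for this action: it contains, e.g., both $V_{1,2}$ and $V_{2,1}$, which are related by the swap, so with your family $(\mathcal{H}_n)$ the simultaneous controllability claimed in part~(ii) would actually be false.

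\textbf{Part (ii).} The degeneracy is more severe than your phrase ``subspaces $V_{m,k}$, $V_{m',k}$ with the same $k$ share identical spectra'' suggests: the spectral gaps $\lambda_{j+1,k}-\lambda_{j,k}$ are independent of \emph{both} $m$ and $k$, so every pair of subsystems shares all of its resonance frequencies. The paper's ``abstract perturbative criterion'' (Theorem~\ref{thm:sim}) resolves this not by a rational-independence or Kronecker-type argument on the unperturbed system --- where there is nothing to be rationally independent, since all gaps coincide exactly --- but by a constant shift of the control, $u=\mu+v$, so that the relevant drift becomes $H_{\rm rot}+\mu H_z$. The first-order Stark shift of a gap is $-2\delta\tfrac{km}{j(j+1)(j+2)}$ and separates subsystems with distinct $km$; the second-order shift separates the remaining ones with $km=k'm'$ but $(k,m)\neq(k',m')$ inside $\mathcal{N}$. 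After this perturbation the chains of connectedness become mutually non-resonant and one can invoke the non-resonant-chain controllability theorem. Without the shift $\mu$, the argument you outline does not get off the ground, since the needed spectral separation is not present in the unperturbed drift.
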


 The first statement is indeed both an 
obstruction to controllability, since the norm of each $\mathcal{H}_n$ component of the wave function is 
conserved for any choice of control, and a partial obstruction to simultaneous controllability, since the dynamics in $\mathcal{G}$ are related to the dynamics in $\mathcal{H}$ for any choice of control.
The second part of Theorem \ref{THE_main} states a simultaneous approximate controllability result in $\mathcal{H}$ w.r.t. $n$, and may be refined in the following way.
\begin{proposition}\label{PRO_compelment}
 In the second statement of Theorem 
\ref{THE_main}, $u$ can be chosen to be analytic instead of $L^1$ and the  
majoration $\|\Upsilon^u_T(\psi_0)-\psi_1 \|<\epsilon$ can be required to hold for the graph norm of $H_{\rm rot}^k$ for any $k$ in $\mathbf{N}$. 
\end{proposition}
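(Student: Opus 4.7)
My plan proceeds in two independent upgrades of Theorem \ref{THE_main}(ii), whose proof is based on Galerkin-type resonance arguments on the eigenspaces of $H_{\rm rot}$ and therefore produces explicit piecewise-constant controls as building blocks.

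For the analytic regularization, any piecewise-constant $u\in L^1([0,T],\mathbf{R})$ can be approximated in the $L^1$ norm, to arbitrary precision, by a real-analytic function (e.g.\ by convolving with a narrow real-analytic mollifier on a slightly dilated time interval, or by a Bernstein-type polynomial approximation after time rescaling). Since $H_z$ is bounded, the standard Duhamel-plus-Gr\"onwall estimate (cf.\ \cite[Theorem 2.5]{BMS}) shows that $u\mapsto \Upsilon^u_T$ is Lipschitz continuous from $L^1([0,T],\mathbf{R})$ into the unitary group of $L^2({\rm SO}(3))$ equipped with the strong operator topology, with Lipschitz constant at most $\delta T$. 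Replacing the piecewise-constant control by a sufficiently close analytic one therefore degrades the final error by an arbitrarily small amount.

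For the graph-norm upgrade, the aim is to control $\|\Upsilon^u_T(\psi_0)-\psi_1\|$ in the norm of $D(H_{\rm rot}^k)$ when $\psi_0,\psi_1\in D(H_{\rm rot}^k)$. Since ${\rm SO}(3)$ is a compact manifold, $H_{\rm rot}$ is a smooth second-order elliptic operator whose power domains $D(H_{\rm rot}^k)$ coincide with Sobolev spaces $H^{2k}({\rm SO}(3))$; because $H_z=-\delta\cos(\beta)$ is smooth, $H_z$ leaves every $D(H_{\rm rot}^k)$ invariant with bounded operator norm, and the iterated commutators of $H_{\rm rot}^k$ with $H_z$ remain relatively bounded with respect to $H_{\rm rot}^k$. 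This is the \emph{weak coupling} property, which yields an a priori estimate of the form $\|\Upsilon^u_T(\psi)\|_{D(H_{\rm rot}^k)}\leq C_{k,T}\exp(C_{k,T}\|u\|_{L^1})\|\psi\|_{D(H_{\rm rot}^k)}$, obtained by differentiating $H_{\rm rot}^k\Upsilon^u_t\psi$ in $t$ and invoking Gr\"onwall's lemma on the commutator terms.

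The two upgrades merge through a standard truncation-and-approximation scheme. Fix a Galerkin cut-off $N$ and decompose $\psi_j=P_N\psi_j+(I-P_N)\psi_j$ for $j=0,1$, where $P_N$ is the spectral projector of $H_{\rm rot}$ onto its first $N$ eigenspaces. For $N$ large, the tails $(I-P_N)\psi_j$ are small in $D(H_{\rm rot}^k)$, while on ${\rm Ran}(P_N)$ all norms are equivalent up to an $N$-dependent constant. Applying Theorem \ref{THE_main}(ii) together with the analytic regularization provides an analytic $u$ driving $P_N\psi_0$ close to $P_N\psi_1$ in $L^2$, hence in $D(H_{\rm rot}^k)\cap{\rm Ran}(P_N)$ up to this $N$-dependent amplification. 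Combining with the weak-coupling estimate applied to the tails closes the argument. The main technical obstacle is the order in which the three small quantities must be calibrated: one first fixes $N$ large enough to absorb the tails in graph norm, which determines an a priori $L^1$ bound on the synthesized control and thereby an amplification constant via the weak-coupling estimate, and only then selects the finite-dimensional approximation tolerance below $\epsilon$ divided by that constant. That this nested choice is always feasible is exactly the content of the weak coupling framework.
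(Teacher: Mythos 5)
The paper's own proof is a citation: it verifies that $H_z$ maps each $D(H_{\rm rot}^k)$ boundedly into itself (so that $(\ii H_{\rm rot},\ii H_z)$ is \emph{$k$-mildly coupled} in the sense of Definition~5 of \cite{Chambrion-Caponigro-Boussaid-2020}) and then invokes Proposition~23 of that reference together with the density of polynomials in $L^1([0,T],\mathbf{R})$. Your two intended upgrades — analytic controls via $L^1$-density and $L^1$-continuity of the propagator, and graph-norm control via a priori bounds on the scale $D(H_{\rm rot}^k)$ — are exactly the ingredients packaged inside that cited proposition, so you are reconstructing its content rather than replacing it. (Two small inaccuracies along the way: the base controls furnished by Theorem~\ref{THE_main}(ii) are small periodic oscillations around a constant shift, not piecewise-constant; and the Duhamel/Gr\"onwall Lipschitz constant for $u\mapsto\Upsilon^u_T\psi$ in $L^2$ is of order $\|H_z\|=\delta$, not $\delta T$.)

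The genuine gap is in your combination step. You claim that the analytic control drives $P_N\psi_0$ close to $P_N\psi_1$ in $L^2$ ``hence in $D(H_{\rm rot}^k)\cap{\rm Ran}(P_N)$ up to an $N$-dependent amplification.'' That inference relies on finite-dimensional norm equivalence on ${\rm Ran}(P_N)$, but $\Upsilon^u_T(P_N\psi_0)$ does \emph{not} lie in ${\rm Ran}(P_N)$: the propagator leaks out of every Galerkin subspace, and the graph-norm size of that leakage is precisely what must be controlled. The missing idea is an interpolation inequality on the Sobolev scale, of the schematic form
$\|\phi\|_{D(H_{\rm rot}^k)}\lesssim \|\phi\|_{L^2}^{\,1/(k+1)}\,\|\phi\|_{D(H_{\rm rot}^{k+1})}^{\,k/(k+1)}$,
applied to $\phi=\Upsilon^u_T(\psi_0)-\psi_1$: the $L^2$ factor is made small by Theorem~\ref{THE_main}(ii), while the $D(H_{\rm rot}^{k+1})$ factor is kept bounded by your a priori estimate — but that bound requires $\|u\|_{L^1}$ to stay under control \emph{uniformly} as the $L^2$ tolerance shrinks, which is not automatic and is where the real work lies. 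Your closing paragraph correctly diagnoses a ``nested calibration'' issue, but then asserts ``that this nested choice is always feasible is exactly the content of the weak coupling framework'' — i.e.\ it cites the conclusion rather than proving it. Without the interpolation step and the uniform $L^1$ bound on the synthesized controls, the proof does not close; with them, you would essentially be rederiving Proposition~23 of \cite{Chambrion-Caponigro-Boussaid-2020}, which is what the paper cites instead.
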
  

 Beside this theoretical result, we show with a numerical example that the proof is constructive, as it furnishes a method to obtain explicit control laws inducing a selective transfer 
between eigenstates of the rotational Hamiltonian.

\subsection{A brief survey of the literature}
The study of the controllability properties of quantum systems modelled through the bilinear Schr\"odinger equation is a fundamental problem for applications in physics and chemistry. Molecular systems are prototypes of degenerate systems and have been investigated in theoretical physics since the early days of quantum control \cite{rabitz,rama,Schirmer_2002,turinici-rabitz}, with well-established experimental applications in quantum chemistry \cite{PattersonNature13} and recently theoretical ones in quantum computation \cite{victor}. 
For an overview on the controllability of molecular rotation and its applications we refer also to \cite{koch}. In an abstract framework, one usually writes the dynamics as  
\begin{equation}\label{eq:intro}
\ii \dot{\psi}=(H_0+uH_1)\psi, 
\end{equation}
where $H_0$  and $H_1$ are self-adjoint operators on some Hilbert space, endowed with Hilbert product $\langle \cdot, \cdot \rangle$. When the Hilbert space is finite-dimensional, controllability is well-understood in terms of Lie-algebraic conditions \cite{lierank,Schirmer2}. When the dimension is infinite, the question of the controllability of such quantum bilinear control systems raised much interest in 
the last two decades, and has been attacked with various techniques  (see, e.g., \cite{Coron, laurent} for fixed point techniques, \cite{Mirra2,nersesyan} for Lyapunov techniques or \cite{bloch, BCMS, Keyl} for the geometric techniques similar to our approach in this work).

\subsubsection{Obstruction to (simultaneous) controllability}
An obvious obstruction to the controllability of system \eqref{eq:intro} is the stability of strict closed Hilbert subspaces by $H_0$ and $H_1$. This situation has already been noted in \cite{Ugo-Mario-Io-symmetrictop}. A less obvious obstruction is the existence of isomorphisms between decoupled dynamics that makes them related, hence not simultaneously controllable: this is the content of the second statement of Theorem \ref{THE_main}(i).

\subsubsection{Averaging and selective excitation}
Averaging is a standard technique to induce a rotation on the subspace spanned by two eigensates $\phi_1$ and $\phi_2$ of $H_0$, associated with simple eigenvalues $\lambda_1$ and $\lambda_2$ by using a periodic control with period $\dfrac{2\pi}{|\lambda_1-\lambda_2|}$ \cite{chambrion}. 
Under generic conditions, this technique is extremely efficient in large time.   
The main difficulty in controlling degenerate quantum systems 
is that a periodic control pulse that oscillates in resonance with a spectral gap 
$ |\lambda_1-\lambda_2|$ of the drift does not select in general only one transition between two corresponding eigenstates, as it excites transitions between all couples of eigenstates each belonging to one of the two addressed degenerate eigenspaces. To overcome this difficulty, we use a  perturbative approach (as in \cite{BCMS, panati, duca, zhang}), replacing $u$ by $u(t)=\mu+v(t)$ 
for a suitable constant $\mu$, and taking $v$ to be periodic in resonance with the spectral gaps of $H_0+\mu H_1$. It is interesting to notice that the idea of perturbing the rotational spectrum with an electric field to lift the degeneracies has a long history in spectroscopy experiments \cite{dakin}.

 Alternatively, the procedure of breaking coupled rotational transitions is often conducted by physicists by means of several orthogonal controls (so-called multi-polarization, see e.g. \cite{turinici-rabitz-linear-top, monika_eugenio} for controllability results on finite dimensional modal truncations with three orthogonal control fields). The controllability of the corresponding PDEs (with three orthogonal control fields) has been established in \cite{BCS,Ugo-Mario-Io-symmetrictop,
asymm-top}.




\subsubsection{Novelty of the contribution} 

 This work is the first one dealing with the controllability of the orientation of the symmetric molecule with one control field only. We give a complete description of the approximate controllability properties of this system. This settles an open question asked in Section II-E of \cite{koch}. The techniques we use are proved effective with a numerical example.

\subsection{Content of the paper}
Section \ref{SEC_Simultaneous_approximate_main_section} is devoted to the proof of Theorem \ref{THE_main}. A general abstract controllability test in presented in Section \ref{sec:simcontrol}, and applied to our example in Section \ref{sec:molecule}. Section \ref{sec:simulations} presents the result of numerical simulations. 



\section{Simultaneous approximate controllability : a perturbative approach}
\label{SEC_Simultaneous_approximate_main_section}

\subsection{Non-resonant chains of connectedness}
\begin{definition}\label{assumption1}
A couple of linear operators $(A,B)$ on an infinite-dimensional Hilbert space $\mathfrak{H}$ satisfies $\mathbb{A}$ if
\begin{itemize}
\item[(i)] $A$ (with domain $D(A)$) is a skew-adjoint unbounded operator, with discrete simple spectrum (that is, every point in the spectrum is a purely imaginary eigenvalue with multiplicity one);
\item[(ii)] $B$ is a skew-adjoint bounded operator.
\end{itemize}
\end{definition}
We consider the family of systems
\begin{equation}\label{eq:m}
 \dot{\psi}=(A_m+uB_m)\psi,\quad  \psi\in\mathcal{H}_m,
 \end{equation}
$m\in \mathbf{N}$, where $\mathcal{H}_m$ is an infinite-dimensional Hilbert space, $u\in L^1_{\rm loc}(\mathbf{R},\mathbf{R})$, and $(A_m,B_m)$ is supposed to satisfy $\mathbb{A}$ for every $m\in \mathbf{N}$. We denote by $\Upsilon^{u,m}_t$ the propagator of \eqref{eq:m} (and we shall drop the dependence of $m$ when it is applied to an initial datum in $\mathcal{H}_m$ since there is no ambiguity).
\begin{definition}
System \eqref{eq:m} is approximately controllable if for every $\psi_0,\psi_1$ in $\mathcal{H}_m$ with $\|\psi_0\|=\|\psi_1\|$, and every $\epsilon> 0$ there exist $T\geq 0$ and $u\in L^1([0,T],\mathbf{R})$ such that $\| \Upsilon_T^u(\psi_0)-\psi_1\| <  \epsilon. $
\end{definition}
We denote the sets of eigenvalues and eigenfunctions of $A_m$, resp., by $\Lambda_m:=\{\lambda_j^m\}_{j\in\mathbf{N}}$ and $\Phi_m:=\{\phi^m_j\}_{j\in\mathbf{N}}$,
we introduce the notation $b_{(j,m),(j',m)}:=\langle \phi_j^m,B_m \phi_{j'}^{m}\rangle$, and the set $\Xi_m:=\{(j,m)\}_{j\in \mathbf{N}}$ that labels the eigenfunctions of $A_m$.
\begin{definition}
The operator $B_m$ is said to be \emph{connected} w.r.t. $\Phi_m$ if for any couple of labels $\rho,\xi\in\Xi_m$ there exists a finite sequence $\{(\rho^1_1,\rho^1_2),\dots,(\rho^p_1,\rho^p_2)\}\subset \Xi_m^2$ that connects them, that is \begin{itemize}
\item $\rho_1^1=\rho$ and $\rho^p_2=\xi$;
\item $\rho^n_2=\rho^{n+1}_1, \forall n=1,\dots,p-1$;
\item $b_{\rho^n_1,\rho^n_2}\neq 0, \forall n=1,\dots ,p$.
\end{itemize}
\end{definition}
If $B_m$ is connected w.r.t. $\Phi_m$, one can choose a chain of connectedness $S_m\subset \Xi_m^2$ w.r.t. $\Phi_m$, that is a sequence that connects any couple of labels $\rho,\xi \in \Xi_m$.
\begin{definition}
A chain of connectedness $S_m$ is said to be non-resonant w.r.t. $\Phi_m$ if for any $(\rho=(j,m),\xi=(j',m))\in S_m$, one has $|\lambda^m_j-\lambda^m_{j'}|\neq |\lambda^m_l-\lambda^m_{l'}|$ for all $((l,m),(l',m))\in \Xi_m^2\setminus\{(\rho,\xi),(\xi,\rho)\}$ such that $b_{(l,m),(l',m)}\neq 0$.
\end{definition}
The following result is the starting point of our analysis.
\begin{theorem}[\cite{BCCS}]\label{thm:chain}
If \eqref{eq:m} admits a non-resonant chain of connectedness w.r.t. $\Phi_m$, then it is approximately controllable.
\end{theorem}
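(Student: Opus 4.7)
The plan is to combine two ingredients: a Galerkin-type reduction to finite dimensions, and a selective two-level averaging procedure that propagates amplitude along the chain $S_m$ one link at a time. Since $\Phi_m$ is an orthonormal basis of $\mathcal{H}_m$ and $B_m$ is bounded, for any $\psi_0,\psi_1\in\mathcal{H}_m$ of equal norm and any $\epsilon>0$ I would first replace $\psi_0$ and $\psi_1$ by $L^2$-approximations supported on a finite subset $\Sigma\subset\Xi_m$, and then rely on the standard fact that $\Upsilon^u_t$ is well approximated, uniformly on bounded $L^1$ neighbourhoods of controls, by the propagator of its Galerkin compression onto $\mathrm{span}\{\phi_j^m : (j,m)\in\Sigma\}$ when $\Sigma$ is large. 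It then suffices to approximately realize, on a sufficiently large finite-dimensional truncation, a unitary sending the $\Sigma$-truncation of $\psi_0$ to that of $\psi_1$.

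The heart of the argument is the effect of a single sinusoidal pulse at a resonant frequency. Picking a link $(\rho,\xi)=((j,m),(j',m))\in S_m$ and setting $\omega:=|\lambda_j^m-\lambda_{j'}^m|$, I would use the control $u(t)=(a/T)\cos(\omega t+\theta)$ on $[0,T]$. Passing to the interaction picture driven by $A_m$, the dynamics become $\dot\psi=\widetilde B(t)\psi$ with matrix entries $u(t)\,b_{(k,m),(l,m)}\,e^{\ii(\lambda_k^m-\lambda_l^m)t}$ in the basis $\Phi_m$. Time-averaging retains only the entries whose oscillating phase cancels; by the non-resonance hypothesis these are exactly the $(j,j')$ and $(j',j)$ ones. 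Quantitative averaging estimates at the finite Galerkin level then give that, as $T\to\infty$ with $a$ fixed, $\Upsilon^u_T$ approaches a two-level rotation in $\mathrm{span}(\phi_j^m,\phi_{j'}^m)$, of angle controlled by $a$ and phase by $\theta$, acting as the identity on the remaining eigenvectors of $\Sigma$.

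Concatenating finitely many such pulses along a finite subchain of $S_m$ that connects the supports of the truncations of $\psi_0$ and $\psi_1$, I obtain approximate realizations of arbitrary unitary transformations on the chosen Galerkin space, and in particular an approximate transfer of $\psi_0$ onto $\psi_1$. The delicate point, and the main obstacle, is a triple error budget: the Galerkin truncation error, the residual $O(1/T)$ averaging error at each link, and the accumulation of these errors when composing finitely many pulses must all simultaneously be pushed below $\epsilon$. This forces me to quantify the averaging rate uniformly on the compact subchain used, to propagate it through the finite product of propagators via a Lipschitz-type bound, and to choose the Galerkin dimension large enough that spectator levels are never significantly populated during the whole protocol. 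The non-resonance hypothesis is precisely what prevents unwanted cross-talk between transitions during averaging, and is therefore the load-bearing assumption of the whole argument.
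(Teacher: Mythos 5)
This theorem is imported from \cite{BCCS} and the paper offers no proof of its own; it is used as a black box in the proof of Theorem~\ref{thm:sim} via the intermediate statement \cite[Prop.~4.1]{CS}. Your sketch captures the essential strategy of the cited reference: Galerkin truncation (made tractable by the boundedness of $B_m$), resonant oscillating controls with amplitude scaled like $1/T$ so that $\|u\|_{L^1}$ stays bounded as the averaging horizon grows, interaction-picture averaging whose off-resonant terms vanish precisely because the chain is non-resonant, and concatenation of the resulting selective two-level rotations along $S_m$ with a three-part error budget. One point deserves to be made explicit rather than asserted: a single resonant pulse gives you, at each link, not an arbitrary element of $SU(2)$ but a one-parameter family $\exp(\tau\mathcal{E}_\sigma(B_m))$ with a phase degree of freedom coming from $\theta$; passing from these generators on a connected chain to the claim that one can approximately realize "arbitrary unitary transformations" on the Galerkin space --- or, what actually suffices, any norm-preserving state transfer --- requires a short Lie-algebraic (or group-closure) argument which you leave implicit. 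In \cite{BCCS} this step is exactly what the non-resonant-connectedness hypothesis is designed to feed into, and it is the part of the argument that cannot be waved through. Apart from that, your outline is a faithful rendering of the proof in the reference the paper relies on.
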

\subsection{A simultaneous approximate controllability test}\label{sec:simcontrol}
We remark that in \eqref{eq:m} the control function $u$ does not depend on $m$, meaning that our goal is to simultaneously control a family of systems with the same external field.
\begin{definition}
The family of systems \eqref{eq:m}, $m\in\mathbf{N}$, is \emph{simultaneously approximately controllable} if, for every $r\in \mathbf{N}$, every $\psi_{0}^{m_j},\psi_{1}^{m_j}\in\mathcal{H}_{m_j}$ with $\|\psi_{0}^{m_j}\|=\|\psi_{1}^{m_j}\|$, and every $\epsilon >0$, there exist $T\geq 0$ and $u\in L^1([0,T],\mathbf{R})$ such that
$\|\Upsilon^{u}_{T}(\psi_{0}^{m_j})-\psi_{1}^{m_j}  \| <\epsilon$, $\forall j=1,\dots,r.$
\end{definition}
Notice that the controllability of each single system does not imply in general the simultaneous controllability of the family; indeed, a part of it may not be simultaneously controllable with only one external field. This is the case, e.g., for the evolutions in $\mathcal{H}_m$ and $\mathcal{H}_{m'}$ if $\mathcal{H}_m=\mathcal{H}_{m'}$, $A_m=A_{m'}$ and $B_m=B_{m'}$ for some $m'\neq m$. When two spectral gaps corresponding to two different drifts $A_m$ and $A_{m'}$, $m\neq m'$, happen to be equal (that is, a spectral degeneracy appears in the family of systems), the variation of the eigenvalues of $A_m$ (resp. $A_{m'}$) under the action of $B_m$ (resp. $B_{m'}$), considered as a perturbation, can lift such degeneracy and thus furnish the simultaneous controllability in $m$ and $m'$. This is the content of the next result, where the variation is expanded up to the second order w.r.t. the perturbation parameter.
\begin{theorem}\label{thm:sim}
Suppose that for every $m\in\mathbf{N}$ system \eqref{eq:m} admits a non-resonant chain of connectedness $S_m$ w.r.t. $\Phi_m$ and either one of the following holds
\begin{itemize}
\item[(i)] $\lambda^m_j-\lambda^m_{j'}=\pm (\lambda^n_{l}-\lambda^n_{l'})$ for some $((j,m),(j',m))\in S_m$ and
$((l,n),(l',n))\in \Xi_n^2$ implies
\begin{align}\label{eq:first}
b_{(j,m),(j,m)}-b_{(j',m),(j',m)}  \neq \pm  \left( b_{(l,n),(l,n)}-b_{(l',n),(l',n)}\right);
\end{align}
\item[(ii)]$\lambda^m_j-\lambda^m_{j'}=\pm (\lambda^n_{l}-\lambda^n_{l'})$ and $
b_{(j,m),(j,m)}-b_{(j',m),(j',m)}=\pm (b_{(l,n),(l,n)}-b_{(l',n),(l',n)})$ for some $((j,m),(j',m))\in S_m$
and $((l,n),(l',n))\in \Xi_n^2$ implies
\begin{eqnarray}
\sum_{k\neq j}\frac{|b_{(j,m),(k,m)}|^2}{\lambda^m_k-\lambda^m_j}-\sum_{k\neq j'}\frac{|b_{(j',m),(k,m)}|^2}{\lambda^m_k-\lambda^m_{j'}}
\neq  \pm\left(\!\! \sum_{k\neq l}\frac{|b_{(l,n),(k,n)}|^2}{\lambda^n_k-\lambda^n_l} \!- \!\sum_{k\neq l'}\frac{|b_{(l',n),(k,n)}|^2}{\lambda^n_k-\lambda^n_{l'}}
\!\!\right)\!\!.\label{eq:second}
\end{eqnarray}
\end{itemize}
Then, the family of systems \eqref{eq:m}, $m\in\mathbf{N}$, is simultaneously approximately controllable.
\end{theorem}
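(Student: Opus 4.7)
The plan is to substitute $u(t) = \mu + v(t)$ for a small constant $\mu \ne 0$, so that the drift of system \eqref{eq:m} becomes $A_m^\mu := A_m + \mu B_m$ while $B_m$ remains the coupling operator for the new control $v$. Since $B_m$ is bounded skew-adjoint and $A_m$ has discrete simple spectrum, Kato--Rellich analytic perturbation theory will provide, for $|\mu|$ small, eigenvalues $\lambda^m_j(\mu)$ and normalized eigenvectors $\phi^m_j(\mu)$ of $A_m^\mu$ depending analytically on $\mu$, with Rayleigh--Schr\"odinger expansions whose first- and second-order coefficients at $\mu = 0$ agree (up to a fixed sign convention coming from the imaginary nature of the $b_{\cdot,\cdot}$'s) with $b_{(j,m),(j,m)}$ and with the sum on the left-hand side of \eqref{eq:second}, respectively. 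The perturbed matrix elements $\langle \phi^m_j(\mu), B_m \phi^m_{j'}(\mu)\rangle$ are also analytic in $\mu$ and reduce to $b_{(j,m),(j',m)}$ at $\mu = 0$, so $S_m$ remains a chain of connectedness for $B_m$ with respect to $\{\phi^m_j(\mu)\}_j$ for small $\mu$.

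Next, I would select $\mu$ so as to break all remaining resonances across $\bigcup_{i=1}^r S_{m_i}$ for the fixed finite family $m_1,\dots,m_r$. For each $\rho = ((j,m),(j',m)) \in S_m$ and each other pair $\sigma = ((l,n),(l',n))$ with $m,n \in \{m_1,\dots,m_r\}$, distinct from $\rho$ and from its reverse, such that $\langle \phi^n_l(\mu), B_n \phi^n_{l'}(\mu)\rangle \ne 0$, consider the analytic function
\[
f^{\pm}_{\rho,\sigma}(\mu) := \bigl(\lambda^m_j(\mu) - \lambda^m_{j'}(\mu)\bigr) \mp \bigl(\lambda^n_l(\mu) - \lambda^n_{l'}(\mu)\bigr).
\]
If $f^{\pm}_{\rho,\sigma}(0) \ne 0$, non-resonance persists in a neighbourhood of $\mu = 0$ by continuity. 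If $f^{\pm}_{\rho,\sigma}(0) = 0$, hypothesis (i) forces $(f^{\pm}_{\rho,\sigma})'(0) \ne 0$, while hypothesis (ii) forces $(f^{\pm}_{\rho,\sigma})'(0) \ne 0$ or, failing that, $(f^{\pm}_{\rho,\sigma})''(0) \ne 0$. In every case $f^{\pm}_{\rho,\sigma}$ has only isolated zeros on its disc of analyticity. Since the pairs $(\rho,\sigma)$ form a countable set and each contributes a countable (discrete) set of zeros, the union of exceptional $\mu$'s is countable; its complement is dense in every neighbourhood of $0$, and I can pick a good $\mu \ne 0$ with $|\mu|$ as small as desired.

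For such a $\mu$, the perturbed family $\dot\psi = (A_m^\mu + v B_m)\psi$, $m \in \{m_1,\dots,m_r\}$, admits $S_{m_i}$ as a non-resonant chain of connectedness within each $\mathcal{H}_{m_i}$, and all of its chain-gaps are mutually distinct and distinct from every other coupled gap across the family. I would then invoke the averaging/rotating-wave argument underlying the proof of Theorem \ref{thm:chain} in \cite{BCCS}, applied on $\bigoplus_{i=1}^r \mathcal{H}_{m_i}$: an oscillating pulse $v(t) = a \cos(\omega t)$ tuned to one chain-gap of some $A_{m_i}^\mu$ induces, on the timescale $1/a$, a two-dimensional rotation in the corresponding eigenspace of $\mathcal{H}_{m_i}$ alone, while perturbing every other component of the family by only $O(a)$. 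Concatenating such pulses along $\bigcup_i S_{m_i}$ will yield simultaneous $\epsilon$-reachability of any norm-compatible target, the final control being $u = \mu + v$.

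The hard part will be the precise identification of hypotheses (i)--(ii) with the non-vanishing of the first two derivatives of $f^{\pm}_{\rho,\sigma}$ at $\mu = 0$, which requires careful bookkeeping of the Rayleigh--Schr\"odinger expansion in the skew-adjoint setting, where the matrix elements $b_{\cdot,\cdot}$ are purely imaginary. A secondary subtlety I would need to handle is that the combined drift $\bigoplus_i A_{m_i}^\mu$ may still have genuine cross-system eigenvalue coincidences $\lambda^m_j = \lambda^n_l$ for $m \ne n$, which would make Theorem \ref{thm:chain} inapplicable as a black box; instead I would re-run the averaging argument directly, using only the chain-wise non-resonance that the choice of $\mu$ provides, which is all that the rotating-wave construction actually needs.
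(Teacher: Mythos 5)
Your proposal is correct and follows essentially the same route as the paper: shift the control by a constant $\mu$ so that the drift becomes $A_m + \mu B_m$, use Kato analytic perturbation theory to see that the first and second derivatives at $\mu=0$ of the gaps $\lambda^m_j(\mu)-\lambda^m_{j'}(\mu)$ are governed precisely by the quantities in \eqref{eq:first} and \eqref{eq:second}, conclude that for all but a discrete set of $\mu$ near $0$ every chain gap of $S_m$ becomes distinct from every other coupled gap across the finite family, and then control the non-degenerate perturbed family. The only cosmetic difference is that where you propose to re-run the rotating-wave argument directly (correctly noting that the direct sum of the drifts does not have simple spectrum and so Theorem~\ref{thm:chain} cannot be invoked as a literal black box), the paper instead cites a refined propagator estimate from \cite{CS} (their Prop.~4.1) that already packages the selective excitation on finite-dimensional Galerkin blocks while leaving the other systems' components close to the identity; both lead to the same conclusion.
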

\begin{proof}
\textbf{Step 1}: If there are no degenerate transitions, that is, if for all $m,n\in\mathbf{N}$
\begin{equation}\label{eq:noresonances}
|\lambda^m_j-\lambda^m_{j'}|\neq|\lambda^n_{l}-\lambda^n_{l'}| 
\end{equation}
 for all $((j,m),(j',m))\in S_m$ and all $((l,n),(l',n))\in \Xi_n^2$ such that $b_{(l,n),(l',n)}\neq 0$,
then the family of systems \eqref{eq:m}, $m\in\mathbf{N}$, is simultaneously approximate controllable. Indeed, by denoting for any $m,N\in\mathbf{N}$
$$\Sigma^{(N)}_m:=\{|\lambda^m_j-\lambda^m_{j'}|, ((j,m),(j',m))\in S_m, j,j'\leq N \}$$ the set of spectral gaps of the chain $S_m$ which connect states $\phi_j^m,\phi_{j'}^m$ with $j,j'\leq N$, one has that for any $r\in\mathbf{N}$, any $\sigma\in \Sigma^{(N)}_{m_1}$, and any $\tau,\epsilon>0$, there exists a control $u\in L^1([0,T],\mathbf{R})$ such that \cite[Prop. 4.1]{CS}
\begin{align}
 \|\Upsilon^{u,m_1}_T -e^{\tau \mathcal{E}_\sigma(B_{m_1})}\|_{\mathcal{L}(\mathcal{H}_{m_1}^{(N)},\mathcal{H}_{m_1}^{(N)})}&<\epsilon \label{eq:propagator1} \\
 \|\Upsilon^{u,m_j}_T-{\rm Id} \|_{\mathcal{L}(\mathcal{H}_{m_j}^{(N)},\mathcal{H}_{m_j}^{(N)})}&<\epsilon,\forall j=2,\dots,r \label{eq:propagator2}
 \end{align}
 where $\mathcal{H}_m^{(N)}:={\rm span}\{\phi_j^{m},j=1,\dots,N\}$, $\|M\|_{\mathcal{L}(\mathcal{H}_m^{(N)},\mathcal{H}_m^{(N)})}$ denotes the operator norm of any matrix $M:\mathcal{H}_m^{(N)}\rightarrow\mathcal{H}_m^{(N)}$ and the operator $\mathcal{E}_\sigma(B_{m})$ is defined for every $\sigma\geq 0$ as
 $$\langle \phi_j^m, \mathcal{E}_\sigma(B_{m})\phi_{j'}^m\rangle=\begin{cases}
 \langle \phi_j^m,B_m\phi_{j'}^m\rangle, & \text{ if } |\lambda_j^m-\lambda_{j'}^m|=\sigma\\
 0, & \text{ if } |\lambda_j^m-\lambda_{j'}^m|\neq\sigma.
 \end{cases} $$
 The existence of a control $u$ that verifies \eqref{eq:propagator1} and \eqref{eq:propagator2} is guaranteed by the fact that each $S_m$ is non-resonant w.r.t. $\Phi_m$ and by \eqref{eq:noresonances}. Then, \eqref{eq:propagator1} and \eqref{eq:propagator2} imply the simultaneous approximately controllability of the family of systems \eqref{eq:m}, $m\in\mathbf{N}$. \\
 \textbf{Step 2:} If there are resonant transitions but \eqref{eq:first} or \eqref{eq:second} are satisfied, we take a shifted control $u(t) = v(t)+\mu$, obtaining
 \begin{equation}\label{eq:simperturbed}
 \dot{\psi}=(A_m+\mu B_m)\psi+v B_m\psi, \quad \psi\in \mathcal{H}_m.
 \end{equation}
 for $\mu>0$. Then, being $B_m$ bounded, we have that \cite{kato}
  \begin{align*}
 \frac{d}{d\mu}\Big|_{\mu=0}\lambda^m_j(\mu) &= b_{(j,m),(k,m)},\\
 \frac{d^2}{d\mu^2}\Big|_{\mu=0}\lambda^m_j(\mu)&=\sum_{k\neq j}\frac{|b_{(j,m),(k,m)}|^2}{\lambda^m_k-\lambda^m_j},
 \end{align*}
  where $\{\lambda^m_j(\mu)\}_{j\in\mathbf{N}}$ are the eigenvalues (analytic w.r.t. $\mu$) of $A_m+\mu B_m$. A $2^{\rm nd}$ order Taylor expansion then shows that
$$ |\lambda^m_j(\mu)-\lambda^m_{j'}(\mu)|\neq|\lambda^n_{l}(\mu)-\lambda^n_{l'}(\mu)|,\quad \text{for a.e. }\mu, $$
 for all $((j,m),(j',m))\in S_m$ and all $((l,n),(l',n))\in \Xi_n^2$. 
Also, $\langle \phi_j^m(\mu),B_m \phi_{j'}^m(\mu)\rangle\neq 0$ for a.e. $\mu$ if $b_{(j,m),(j',m)}\neq 0$, where $\{\phi^m_j(\mu)\}_{j\in\mathbf{N}}$ are the eigenfunctions (analytic w.r.t. $\mu$) of $A_m+\mu B_m$. We can then apply Step 1 to the family of systems \eqref{eq:simperturbed}, $m\in\mathbf{N}$, by replacing any $|\lambda^m_j-\lambda^m_{j'}|\in\Sigma^{(N)}_m$ with the corresponding $|\lambda^m_j(\mu)-\lambda^m_{j'}(\mu)|$.
\end{proof}

\subsection{Proof of Theorem \ref{THE_main}}
\label{sec:molecule}
In this section we apply Theorem \ref{thm:sim} to the explicit physical system \eqref{eq:top}.
Since $H_{\rm rot}:H^2({\rm SO}(3))\rightarrow L^2({\rm SO}(3))$ is the Laplace-Beltrami operator of the compact manifold ${\rm SO}(3)$ (endowed with the diagonal Riemannian metric ${\rm diag}(A,A,C)$), it has discrete spectrum. The spectral decomposition of $H_{\rm rot}$ is explicit, given in terms of the Wigner $D$-functions $D_j^{k,m}(\alpha,\beta,\gamma)=e^{i(k\gamma+m\alpha)}d_j^{k,m}(\beta)$, $ j\in\mathbf{N}, k,m=-j,\dots,j$, where $d_j^{k,m}$ solves a suitable Legendre differential equation, and reads \cite{gordy}
\begin{equation}\label{eq:spectrum}
H_{\rm rot}D_j^{k,m}=(Aj(j+1)-(A-C)k^2)D_j^{k,m}=:E_j^{k,m}D_j^{k,m},
\end{equation}
for $j\in\mathbf{N},k,m=-j,\dots,j$. Equation \eqref{eq:spectrum} defines the eigenvalues $E_j^{k,m}$ of $H_{\rm rot}$: each $E_j^{k,m}$ has a $2$-dimensional degeneracy w.r.t. $k$, and a $(2j+1)$-dimensional degeneracy w.r.t. the angular momentum orientation $m$: the eigenspace of $E_j^{k,m}$ is thus given by
$\mathcal{E}_j^{ k}:={\rm span}\{D_j^{k,m},D_j^{-k,m}\}_{m=-j,\dots,j}.$
Thanks to the spectral theorem of unbounded self-adjoint operators, one has the orthonormal decomposition of the ambient Hilbert space $L^2({\rm SO}(3))=\overline{{\rm span}}\{D_j^{k,m}\}_{ j\in\mathbf{N},k,m=-j,\dots,j.} $.
The selection rules for $H_z$ w.r.t. the Wigner $D$-functions are \cite{gordy}
\begin{equation}\label{eq:rules}
\langle  D^{k,m}_j \!\!,  H_z D^{k',m'}_{j'} \rangle=0,\text{if } |j-j'|>1, \text{or } k\neq k'\!, \text{or } m\neq m'.
\end{equation}
The non-vanishing matrix elements of $H_z$ are \cite{gordy}
\begin{align}
\langle  D^{k,m}_j ,  \ii H_zD^{k,m}_{j} \rangle &=\ii \delta \dfrac{km}{j(j+1)}=:b_{(j,k,m),(j,k,m)}\label{eq:diagonal},\\ 
\langle  D^{k,m}_j , \ii H_zD^{k,m}_{j+1} \rangle &=\ii \delta  \dfrac{[(j+1)^2-k^2]^{1/2}[(j+1)^2\!-\!m^2]^{1/2}}{-(j+1)[(2j+1)(2j+3)]^{1/2}}\nonumber \\
&=:b_{(j,k,m),(j+1,k,m)}.\label{eq:updiagonal}
\end{align}
For any $(k,m)\in\mathbf{Z}^2$, we consider the infinite-dimensional closed subspace
$\mathcal{H}_{k,m}:=\overline{{\rm span}}\{D_j^{k,m}\mid j\in\mathbf{N}, j\geq \max\{|m|,|k|\}\}$ of $L^2({\rm SO}(3))$ and denote by $p_{(k,m)}:L^2({\rm SO}(3))\rightarrow\mathcal{H}_{k,m}$ the orthogonal projection. We notice that $(H_{\rm rot}|_{\mathcal{H}_{k,m}},H_z|_{\mathcal{H}_{k,m}})$ satisfies $\mathbb{A}$ for every $(k,m)\in\mathbf{Z}^2$. Since 
$\bigoplus_{(k,m)\in\mathbf{Z}^2}\mathcal{H}_{k,m}$ is dense in $L^2({\rm SO}(3))$ and 
each $\mathcal{H}_{k,m}$ is invariant for the propagators of \eqref{eq:top} (cf. \eqref{eq:rules}), system \eqref{eq:top} can be naturally seen as the family of systems
\begin{equation}\label{eq:km}
\ii \dot{\psi}=(H_{\rm rot}|_{\mathcal{H}_{k,m}}+uH_z|_{\mathcal{H}_{k,m}})\psi,\quad \psi\in\mathcal{H}_{k,m},
\end{equation}
$(k,m)\in\mathbf{Z}^2$. We define the set $\mathcal{N}:=\{(k,m)\in\mathbf{Z}\times\mathbf{N}\mid |k|\leq m\}$. The next result classifies which part of \eqref{eq:top} is simultaneously controllable (compare also with Fig. \ref{fig:graphs}).
\begin{theorem}\label{thm:top}
\begin{itemize}
\item[(a)]\textbf{Related dynamics:} Let $(k,m)\in\mathcal{N}$, then the linear isomorphisms defined on the basis as
\begin{align*}
f_{(k,m),1}:\mathcal{H}_{k,m}&\rightarrow \mathcal{H}_{-k,-m}\\
D_j^{k,m}&\mapsto D_j^{-k,-m}, \\
f(t)_{(k,m),2}:\mathcal{H}_{k,m}&\rightarrow \mathcal{H}_{m,k}\\
D_j^{k,m}&\mapsto e^{\ii t( A-C) (k^2-m^2)}D_j^{m,k},\\
f(t)_{(k,m),3}:\mathcal{H}_{k,m}&\rightarrow \mathcal{H}_{-m,-k}\\
D_j^{k,m}&\mapsto e^{\ii t(A-C) (k^2-m^2)}D_j^{-m,-k},
\end{align*}
are such that
\begin{equation}\label{eq:related}
\Upsilon^u_t\circ p_{(k,m)}=f(t)_{(k,m),i}^{-1}\circ\Upsilon^u_t\circ f(t)_{(k,m),i}\circ p_{(k,m)}
\end{equation}
for all $t\in\mathbf{R}$, all $i=1,2,3$, and all $u\in L^1_{\rm loc}(\mathbf{R},\mathbf{R})$.
\item[(b)]\textbf{Non-related dynamics:} The family of systems \eqref{eq:km},
$(k,m)\in\mathcal{N}$, is simultaneously approximately controllable.
\end{itemize}
\end{theorem}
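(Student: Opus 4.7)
My plan is to check part (a) by a direct symmetry computation on the basis $\{D^{k,m}_j\}$, and to derive part (b) from the simultaneous-controllability test of Theorem~\ref{thm:sim}. For part (a), the key structural facts are: $E^{k,m}_j=Aj(j+1)-(A-C)k^2$ depends only on $j$ and $k^2$, so $E^{k,m}_j=E^{-k,-m}_j$ whereas $E^{m,k}_j-E^{k,m}_j=(A-C)(k^2-m^2)$; and the matrix elements \eqref{eq:diagonal}--\eqref{eq:updiagonal} are invariant under $(k,m)\mapsto(-k,-m)$ and symmetric under $(k,m)\leftrightarrow(m,k)$, the diagonal ones depending only on the product $km$ and the off-diagonal ones only on $k^2$ and $m^2$. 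Expanding $\psi(t)=\sum_j c_j(t)D^{k,m}_j$ and substituting into each $f_i$, one checks termwise that $f(t)_{(k,m),i}(\psi(t))$ solves the Schr\"odinger equation on the target subspace with the same control $u$; the time-dependent phase in $f_2,f_3$ is precisely what absorbs the drift shift $(A-C)(k^2-m^2)$, while the invariance of the matrix elements of $H_z$ takes care of the coupling term.

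For part (b), each system \eqref{eq:km} admits the natural chain of connectedness $S_{k,m}=\{((j,k,m),(j+1,k,m)):j\geq\max(|k|,m)\}$: by \eqref{eq:updiagonal} every chain matrix element is non-zero (since $(j+1)^2>k^2,m^2$ in that range), the gaps $2A(j+1)$ are pairwise distinct, and they exhaust the non-zero transition gaps of $\mathcal{H}_{k,m}$, so $S_{k,m}$ is non-resonant. Across two distinct systems $(k,m),(k',m')\in\mathcal{N}$, a chain gap of $\mathcal{H}_{k,m}$ resonates with a non-zero transition gap of $\mathcal{H}_{k',m'}$ only for the unit step $l\to l+1$ with $l=j$; because both gaps are equal (not merely equal in absolute value), only the $+$ sign choice in the hypotheses of Theorem~\ref{thm:sim} is active. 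Condition~\eqref{eq:first} at such a resonance then reduces via \eqref{eq:diagonal} to the algebraic inequality $km\neq k'm'$, which handles every pair outside the set $\{km=k'm'\}$; in particular, the pairs $(k,m)$ vs $(-k,m)$ satisfy $km=-k'm'\neq k'm'$ and are already taken care of.

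The remaining pairs with $km=k'm'$ force the use of~\eqref{eq:second}. Only the terms $l=j\pm 1$ of $\sum_{l\neq j}|b_{(j,k,m),(l,k,m)}|^2/(\lambda^{k,m}_l-\lambda^{k,m}_j)$ survive, and by \eqref{eq:updiagonal} each $|b|^2$ is bilinear in $x:=k^2$ and $y:=m^2$; hence the quantity comparing the two second-order shifts has the form $F(x,y,j)=\alpha(j)+\beta(j)(x+y)+\gamma(j)xy$. Now $km=k'm'$ forces $k^2m^2=(k')^2(m')^2$, and since distinct elements of $\mathcal{N}$ sharing the unordered pair $\{k^2,m^2\}$ necessarily have opposite products $km$, we must also have $k^2+m^2\neq(k')^2+(m')^2$, so \eqref{eq:second} reduces to the single universal inequality $\beta(j)\neq 0$. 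The hardest part is this final check: an explicit computation from \eqref{eq:updiagonal} identifies $\beta(j)$ with a nonzero constant multiple of the discrete second difference $T_{j-1}-2T_j+T_{j+1}$ of the positive, strictly log-convex sequence $T_j:=1/[(j+1)(2j+1)(2j+3)]$, whose strict convexity gives $\beta(j)\neq 0$ for every admissible $j$ and completes the application of Theorem~\ref{thm:sim}.
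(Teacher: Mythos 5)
Your proposal is correct and follows essentially the same route as the paper: the symmetry computation on matrix elements of $H_z$ and $H_{\rm rot}$ for part (a), and the reduction of the first- and second-order non-resonance conditions of Theorem~\ref{thm:sim} to $km\neq k'm'$ and (given $km=k'm'$) to the non-vanishing of a $j$-dependent coefficient times $k^2+m^2-(k')^2-(m')^2$ for part (b). The only notable difference is that you make explicit, via the discrete second difference of the strictly log-convex sequence $T_j=1/[(j+1)(2j+1)(2j+3)]$, the fact that the paper simply asserts (that its $Q(j)$ has no positive integer zeros or poles), which is a useful verification rather than a divergence in method.
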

\begin{proof}
In order to prove (a), we first notice that 
$$\langle D_j^{k,m}, H_z D_{j+h}^{k,m}\rangle=\langle f(t)_{(k,m),i}D_j^{k,m}, H_z f(t)_{(k,m),i}D_{j+h}^{k,m}\rangle$$ for all $h=0,1$, $j\geq \min\{|k|,m\}$, $t\in\mathbf{R}$ and $i=1,2,3$ (cf. \eqref{eq:diagonal} and \eqref{eq:updiagonal}). Also, 
$$\langle D_j^{k,m}, H_{\rm rot} D_{j}^{k,m}\rangle=\langle f_{(k,m),1}D_j^{k,m}, H_{\rm rot} f_{(k,m),1}D_{j}^{k,m}\rangle$$ for all $j\geq \min\{k,|m|\}$ (cf. \eqref{eq:spectrum}), which implies \eqref{eq:related} for $i=1$. Finally, for $i=2,3$, 
\begin{align*}
\langle D_j^{k,m}\!\!, H_{\rm rot} D_{j}^{k,m}\rangle &\!=\!\langle f(t)_{(k,m),i}D_j^{k,m}\!, H_{\rm rot} f(t)_{(k,m),i}D_{j}^{k,m}\rangle\\ &-(A-C)(k^2-m^2)
\end{align*}
 for all $j\geq \min\{|k|,m\}$ and $t\in\mathbf{R}$  (cf. \eqref{eq:spectrum}), which implies \eqref{eq:related} for $i=2,3$ and concludes the proof of (a).

The proof of part (b) is an application of Theorem \ref{thm:sim}: for any $(k,m)\in\mathcal{N}$ we consider the chain of connectedness $S_{(k,m)}:=\{((j,k,m),(j+1,k,m)),j\geq \min\{|k|,m\}\}$, which is non-resonant w.r.t. the eigebasis $\{D^j_{k,m}\mid j\geq \min\{k,|m|\}\}$ of $H_{\rm rot}|_{\mathcal{H}_{k,m}}$. Using \eqref{eq:spectrum} and \eqref{eq:rules}, we check the resonances w.r.t. the eigenbasis of $H_{\rm rot}|_{\mathcal{H}_{k',m'}}$ for $(k',m')\neq (k,m)$: since $E_{j+1}^{k,m}-E_{j}^{k,m}=2A(j+1)$, then
$$E_{j+1}^{k,m}-E_{j}^{k,m}=E_{j'+1}^{k',m'}-E_{j'}^{k',m'}$$
if and only if  $j'=j$ and $k',m'=-j,\dots,j.$
Since $b_{(j+1,k,m),(j+1,k,m)}-b_{(j,k,m),(j,k,m)}=-2\delta\frac{km}{j(j+1)(j+2)}$  (cf. \eqref{eq:diagonal}), then
\begin{align*}
 b_{(j+1,k,m),(j+1,k,m)}-b_{(j,k,m),(j,k,m)} = b_{(j+1,k',m'),(j+1,k',m')}-b_{(j,k',m'),(j,k',m')}
\end{align*}
if and only if $k'm'= km$. Hence, by applying Theorem \ref{thm:sim}(i), we conclude that the family of systems \eqref{eq:km} with $(k,m)\in\mathcal{N}$ and $km\neq k'm'$ is simultaneously approximately controllable. When $km=k'm'$, we consider the second order condition: thanks to \eqref{eq:rules}, this is equivalent to solve the equality
\begin{align*}
&\sum_{\pm}\!\frac{|b_{(j+1,k,m),(j+1\pm 1,k,m)}|^2}{E^{0,m}_{j+1\pm1}-E^{0,m}_{j+1}}\!-\!\sum_{\pm}\!\frac{|b_{(j,k,m),(j\pm1,k,m)}|^2}{E^{0,m}_{j\pm 1}-E^{0,m}_j}\\ = &\sum_{\pm}\!\frac{|b_{(j+1,k,m'),(j+1\pm 1,k,m')}|^2}{E^{0,m'}_{j+1\pm1}-E^{0,m'}_{j+1}}\!-\!\sum_{\pm}\!\frac{|b_{(j,k,m'),(j\pm1,k,m')}|^2}{E^{0,m'}_{j\pm 1}-E^{0,m'}_j},
\end{align*}
which reads
$Q(j)(m'^2+k'^2-m^2-k^2)=0$ (cf. \eqref{eq:updiagonal}), where $Q(j)$ is a quotient of polynomials in $j$ that has no positive integer zeros nor poles,
which implies $(k',m')=(k,m)$, under the assumptions $km=k'm'$, $(k,m),(k',m')\in\mathcal{N}$. By applying Theorem \ref{thm:sim}(ii), we conclude that the family of systems \eqref{eq:km} with $(k,m)\in\mathcal{N}$ is simultaneously approximately controllable.
\end{proof}
\begin{remark}
By noticing that $D_j^{0,m}(\alpha,\beta,\gamma)=Y_j^m(\alpha,\beta)$, where $Y_j^m$ are the spherical harmonics, that is, the eigenfunctions of the Laplace-Beltrami operator $\Delta_{S^2}$ of the $2$-sphere $S^2\subset \mathbf{R}^3$, one has that $L^2(S^2)=\overline{\rm span}\{D_j^{0,m}\mid j\in\mathbf{N}, m=-j\dots,j\}$ and $H_{\rm rot}|_{\bigoplus_{m\in\mathbf{Z}}\mathcal{H}_{0,m}}=H_{\rm rot}|_{L^2(S^2)}=-2A\Delta_{S^2}$. Hence, the case $k=0$ in Theorem \ref{thm:top} classifies the simultaneous approximate controllability w.r.t. the orientational quantum number $m$ of the Schr\"odinger equation
$$\ii \dot{\psi}=(-\Delta_{S^2}-u\,\delta\cos(\beta))\psi,\quad\psi\in L^2(S^2).$$
of a rotating linear molecule (compare also with Fig. \ref{fig:graphs}\subref{k=0}).
\end{remark}
To conclude the proof of Theorem \ref{THE_main}, we consider the lexicographic ordering $l:\mathbf{Z}^2\rightarrow \mathbf{N}$ and set $\mathcal{H}_{l(k,m)}:=\mathcal{H}_{k,m}$ with corresponding orthogonal projection $p_{l(k,m)}:=p_{(k,m)}$ for any $(k,m)\in\mathbf{Z}^2$; hence, \eqref{eq:rules} and Theorem \ref{thm:top}(a) imply that $\mathcal{H}:=\bigoplus_{(k,m)\in\mathcal{N}}\mathcal{H}_{k,m}$ and $\mathcal{G}:=\bigoplus_{(k,m)\in\mathbf{Z}^2\setminus \mathcal{N}}\mathcal{H}_{k,m}$ satisfy the statement (i) of Theorem \ref{THE_main}. Finally, let $\psi_0,\psi_1$ be in $\mathcal{H}$ and such that $\|p_n(\psi_0)\|=\|p_n(\psi_1)\|$ for all $n\in\mathbf{N}$. For $\epsilon>0$ let $r\in\mathbf{N}$ be such that $\|\psi_0-\bigoplus_{i=0}^rp_i(\psi_0)\|<\epsilon/3,
\|\psi_1-\bigoplus_{i=0}^rp_i(\psi_1)\|<\epsilon/3.$ By Theorem \ref{thm:top}(b), there exists $u\in L^1([0,T])$ such that $\|\Upsilon^u_T(\bigoplus_{i=0}^rp_i(\psi_0))-\bigoplus_{i=0}^rp_i(\psi_1)\|<\epsilon/3.$ By triangular inequality, we have that  $\|\Upsilon^u_T(\psi_0)-\psi_1\|<\epsilon.$

\begin{figure}[ht!]\begin{center}
\subfigure[]{
\includegraphics[width=0.48\linewidth, draft = false]{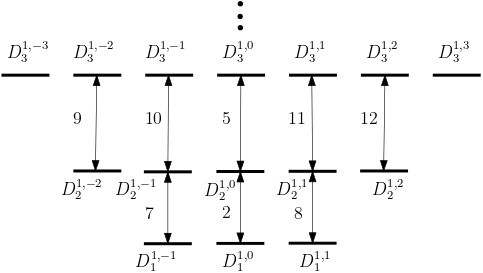} \label{k=1} }
\subfigure[]{
\includegraphics[width=0.48\linewidth, draft = false]{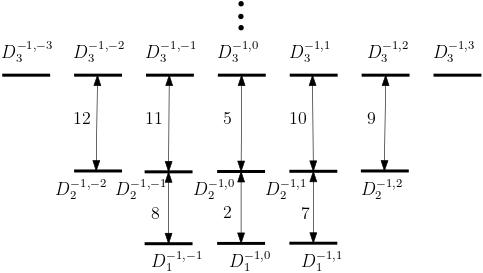} \label{k=-1} }
\subfigure[]{
\includegraphics[width=0.48\linewidth, draft = false]{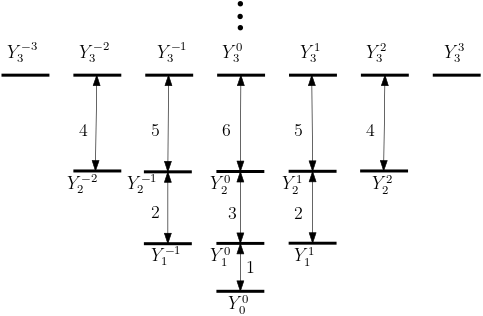} \label{k=0} }
\caption{$H_z$ acting on the spectral graphs of: \subref{k=1} $H_{\rm rot}|_{\bigoplus_{m\in\mathbf{Z}}\mathcal{H}_{1,m}}$; \subref{k=-1} $H_{\rm rot}|_{\bigoplus_{m\in\mathbf{Z}}\mathcal{H}_{-1,m}}$; \subref{k=0} $H_{\rm rot}|_{\bigoplus_{m\in\mathbf{Z}}\mathcal{H}_{0,m}}$. Arrows with same numbers correspond to related transitions; arrows with different numbers correspond to simultaneously controllable transitions.}\label{fig:graphs}
\end{center}
\end{figure}

\subsection{Proof of Proposition \ref{PRO_compelment}}
Since the restriction of $H_z$ is bounded from $D(H_{\rm rot}^k)$ to itself for every 
integer $k$ in $\mathbf{N}$, the system $(\ii H_{\rm rot},\ii H_z)$ is indeed $k$-midly coupled 
for every $k$ in the sense of Definition 5 in \cite{Chambrion-Caponigro-Boussaid-2020}. Proposition 
\ref{PRO_compelment} is a consequence of the density of polynomials in $L^1([0,T],\mathbf{R})$
for any $T>0$ and Proposition 23 in \cite{Chambrion-Caponigro-Boussaid-2020}.

\section{Numerical simulations of orientational selective transfer}\label{sec:simulations}
\subsection{Error estimate for finite-dimensional approximations}\label{sec:error}

In this section we formulate an estimate (which we use in Sec. \ref{sec:results}) of the error made by replacing the original system by one of its Galerkin approximations in the spirit of \cite{weaklycoupled}. 
\begin{definition}
The operator $B_m$ is said to be tri-diagonal w.r.t. $\Phi_m$ if, for any $j, j'\in \mathbf{N}$, $|j-j'|>1$ implies $\langle \phi^m_j,B_m\phi^m_{j'}\rangle=0$.
\end{definition}
Consider the orthogonal projection $\pi^m_N:\mathcal{H}_m\rightarrow \mathcal{H}_m^{(N)}:={\rm span}\{\phi^m_1,\dots,\phi^m_N\}$ on the first $N$ eigenfunctions of $A_m$ and
denote by $X^{u,m}_{(N)}(t,s)$ (for brevity $X^{u}_{(N)}(t,s)$ when it is applied to an initial datum in $\mathcal{H}_m^{(N)}$) the propagator of  
\begin{equation}\label{eq:mN}
\dot{x}=(A_m^{(N)}+uB_m^{(N)})x, \quad x\in\mathcal{H}_m^{(N)}\cong \mathbf{C}^N,
\end{equation}
where $A_m^{(N)}=\pi^m_NA_m\pi^m_N, B_m^{(N)}=\pi^m_NB_m\pi^m_N$. System \eqref{eq:mN} is usually called the $N$-dimensional Galerkin approximation of \eqref{eq:m}.
\begin{proposition}\label{prop:error}
Let $B_m$ be tri-diagonal w.r.t. $\Phi_m$. Then, for every $\psi_0\in\mathcal{H}_m$, $N_1,N\in\mathbf{N}$ with $N_1\leq N$ and $u\in L^1_{\rm loc}(\mathbf{R},\mathbf{R})$,
\begin{align}\label{eq:estimate}
\left\|\pi^m_{N_1} \Upsilon^u_t(\psi_0)-\pi^m_{N_1}X^u_{(N)}(t,0)\pi^m_N\psi_0 \right\| 
\leq \|u\|_{L^1([0,t])}|b_{(N,m),(N+1,m)}|\sup_{s\in[0,t]}\left\|\pi^m_{N_1}X^u_{(N)}(t,s)\phi_N^{m}\right\|. 
\end{align}
\end{proposition}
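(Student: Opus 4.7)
The plan is to derive the bound via a variation-of-constants (Duhamel) identity between the full propagator restricted to the first $N$ modes and the finite-dimensional Galerkin propagator. Concretely, set $\psi(t):=\Upsilon^u_t(\psi_0)$ and $y(t):=\pi^m_N \psi(t)$. Since $A_m$ is diagonal in the basis $\Phi_m$, $\pi^m_N A_m(I-\pi^m_N)=0$, so differentiating gives
\[
\dot y(t)=\bigl(A_m^{(N)}+u(t)B_m^{(N)}\bigr)y(t)+u(t)\,\pi^m_N B_m(I-\pi^m_N)\psi(t).
\]
The tri-diagonality of $B_m$ is the crucial structural hypothesis: it makes the ``leakage'' term $\pi^m_N B_m(I-\pi^m_N)\psi(t)$ collapse to a single rank-one contribution, namely
\[
\pi^m_N B_m(I-\pi^m_N)\psi(t)=b_{(N,m),(N+1,m)}\,\langle\phi^m_{N+1},\psi(t)\rangle\,\phi^m_N,
\]
because the only matrix element of $B_m$ linking an index $\le N$ to an index $>N$ is the one at $(N,N+1)$.

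Next I would apply Duhamel's formula for the forced equation $\dot y = (A_m^{(N)}+uB_m^{(N)})y+f$ with initial data $y(0)=\pi^m_N\psi_0$, obtaining
\[
y(t)=X^u_{(N)}(t,0)\pi^m_N\psi_0+b_{(N,m),(N+1,m)}\!\int_0^t\! u(s)\,\langle\phi^m_{N+1},\psi(s)\rangle\,X^u_{(N)}(t,s)\phi^m_N\,ds.
\]
Applying the further projection $\pi^m_{N_1}$ (which commutes with the inclusion $\pi^m_{N_1}\pi^m_N=\pi^m_{N_1}$ since $N_1\le N$) moves $\pi^m_{N_1}$ inside the integral in front of $X^u_{(N)}(t,s)\phi^m_N$, giving an exact representation of the discrepancy $\pi^m_{N_1}\Upsilon^u_t(\psi_0)-\pi^m_{N_1}X^u_{(N)}(t,0)\pi^m_N\psi_0$.

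Finally I would take norms under the integral, pull out the constant $|b_{(N,m),(N+1,m)}|$, bound $\|\pi^m_{N_1}X^u_{(N)}(t,s)\phi^m_N\|$ by its supremum over $s\in[0,t]$, and use unitarity of $\Upsilon^u_s$ (which follows from the skew-adjointness of $A_m+u(s)B_m$) to bound $|\langle\phi^m_{N+1},\psi(s)\rangle|\le\|\psi(s)\|=\|\psi_0\|\le 1$, leaving the integral $\int_0^t|u(s)|\,ds=\|u\|_{L^1([0,t])}$. Putting these together yields exactly the claimed inequality. The only step requiring care is the collapse of the leakage term under tri-diagonality and the justification of differentiating $y$ pointwise for merely $L^1_{\rm loc}$ controls, which is standard once one works with the mild formulation of the equation rather than the strong one.
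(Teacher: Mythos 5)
Your proposal is correct and follows essentially the same route as the paper: differentiate $\pi^m_N\Upsilon^u_t(\psi_0)$, apply Duhamel, project to $N_1$, and use tri-diagonality to collapse the leakage term to the single $(N,N+1)$ entry before taking norms. The only point worth flagging is that your final bound $|\langle\phi^m_{N+1},\psi(s)\rangle|\le\|\psi_0\|\le1$ tacitly invokes the normalization $\|\psi_0\|\le1$, which the proposition's statement (``for every $\psi_0\in\mathcal{H}_m$'') does not make explicit; strictly speaking the right-hand side of \eqref{eq:estimate} should carry a factor $\|\psi_0\|$, an omission shared by the paper and harmless in the quantum setting where states are unit vectors.
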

\begin{proof}
We have
\begin{align*}
\frac{d}{dt}\pi^m_N \Upsilon^u_t(\psi_0)=(A_m^{(N)}+u B_m^{(N)})\pi^m_N \Upsilon^u_t(\psi_0)+u \pi_N^{m}B_m({\rm Id}-\pi^m_N) \Upsilon^u_t(\psi_0).
\end{align*}
Using the variation of constants formula, we integrate
\begin{align*}
\pi^m_N \Upsilon^u_t(\psi_0)=X^u_{(N)}(t,0)\pi^m_N\psi_0+\int_0^tu(s)X^u_{(N)}(t,s)\pi_N^{m}B_m({\rm Id}-\pi^m_N) \Upsilon^u_s(\psi_0)ds, 
\end{align*}
and then we project on a subspace of dimension $N_1\leq N$
\begin{align*}
\pi^m_{N_1} \Upsilon^u_t(\psi_0)=\pi^m_{N_1}X^u_{(N)}(t,0)\pi^m_N\psi_0+\int_0^tu(s)\pi^m_{N_1}X^u_{(N)}(t,s)\pi_N^{m}B_m({\rm Id}-\pi^m_N) \Upsilon^u_s(\psi_0)ds.
\end{align*}
Thanks to the tri-diagonal structure, we have 
$$\pi_N^{m}\!B_m({\rm I}-\pi^m_N)\! \Upsilon^u_s(\psi_0)\!=\!b_{(N,m),(N+1,m)}\langle \phi^m_{N+1},\!\Upsilon^u_s(\psi_0)\rangle \phi^m_N,$$
and the thesis follows.
\end{proof}
\begin{remark}
For applications, $N$ and $N_1$ are the dimensions of the spaces, respectively, where the numerical simulation is performed and where the transfer approximately happens.
\end{remark}

\subsection{Construction of the control laws and results}\label{sec:results}

In this final section, considering $A=1$, $C=2$ in \eqref{EQ_def_Hrot},  we numerically simulate the transfer between the two rotational states 
\begin{align*}
\psi_0=\frac{1}{\sqrt{3}}(D_1^{1,-1}+D_1^{1,0}+D_1^{1,1}), \quad
 \psi_1=\frac{1}{\sqrt{3}}(D_1^{1,-1}+D_1^{1,0}+D_2^{1,1}).
 \end{align*}
 In the spirit of \cite{chambrion}, we consider the control function
\begin{align}\label{eq:control}
&u(t)=1+\frac{1}{25} \Big(2.38\sin (3.71\,t)-4.42\sin (9.63\,t)  \\ &+7.13\sin (17.59\,t)
+0.01\sin (5.91\,t)-0.02\sin (13.88\,t)\Big), \nonumber
\end{align}
which is a suitable linear combination of periodic functions that oscillate in resonance 
with the spectral gaps of the perturbed drift $(H_{\rm rot}+H_z)|_{\mathcal{H}_{1,1}}$ 
corresponding to $(k,m)=(1,1)$. Denoting by $R$ the matrix of the target rotation,
the coefficients in \eqref{eq:control} 
are obtained as the ratio between the off-diagonal entries of the matrices $\log 
R$ and $H_z$, both expressed in a basis where $H_{\rm rot}+H_z$ is diagonal. 

We use the control law \eqref{eq:control} on subspaces  spanned by the first $N=10$ energy levels of the spaces $\mathcal{H}_{1,m}$, $m=-1,0,1$  (with error less than $10^{-6}$ on the first $N_1=2$, by Prop. \ref{prop:error}). The results are presented on Fig. \ref{FIG_Transfert_reussi}. 
The Octave/Matlab script used for the computation is available on the companion webpage of this paper.

\begin{figure}
\centering
\includegraphics[width=0.68\textwidth,trim=120 250 130 340, clip]{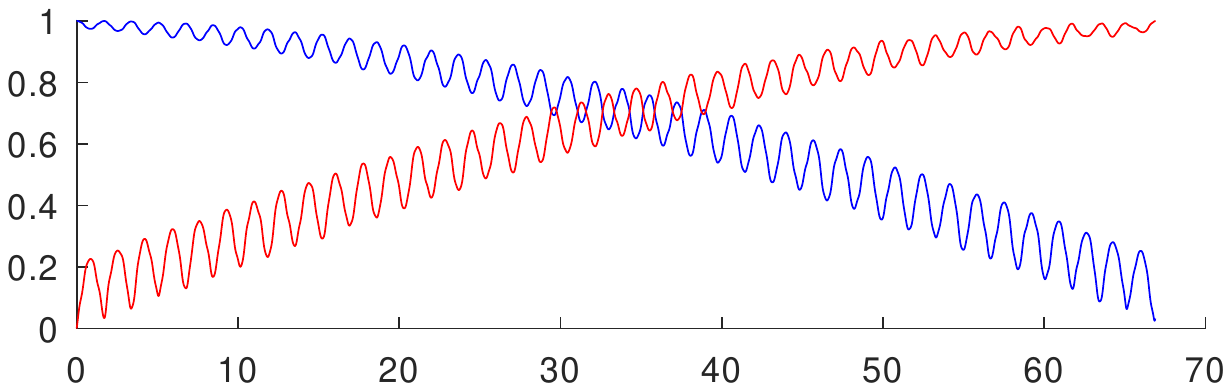}
\vspace{-2.2cm}

\includegraphics[width=0.68\textwidth,trim=120 360 130 370, clip]{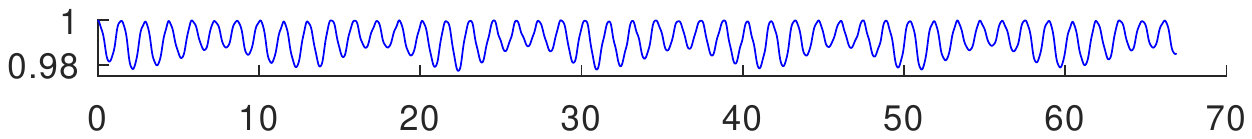}
\caption{Top. Evolution with respect to time of the components moduli $|\langle D_1^{1,1}, \Upsilon^u_t( D_1^{1,1})\rangle |$ (in blue) and $|\langle D_2^{1,1}, \Upsilon^u_t( D_1^{1,1})\rangle |$ (in red). The control law $u$ is given by \eqref{eq:control}.  The maximum  of the red curve $0.999$ is obtained at time $T=66.889$. \newline
Bottom. Evolution with respect to time of the component modulus $|\langle D_1^{1,-1}, \Upsilon^u_t( D_1^{1,-1})\rangle |$. At time $T=66.889$, $|\langle D_1^{1,-1}, \Upsilon^u_t( D_1^{1,-1})\rangle |>0.985$. The picture is similar for $|\langle D_1^{1,0}, \Upsilon^u_t( D_1^{1,0})\rangle |$.}\label{FIG_Transfert_reussi}
\end{figure}


\section{Conclusion}

We have exposed the controllability properties of the orientation of a symmetric molecule. 
While the result is constructive, further work is needed to optimize the choice of the parameters (especially the shift of the drift) in order to minimize the controllability time. \\

\textbf{Acknowledgments}\\
The authors thank C.P.Koch, M.Leibscher and D.Sugny for fruitful discussions.

This work is part of the project CONSTAT, supported by the Conseil 
Régional de Bourgogne Franche Comté and the European Union through the PO FEDER 
Bourgogne 2014/2020 programs,  by the French ANR through the grant QUACO (17-
CE40-0007-01) and by EIPHI Graduate School (ANR-17-EURE-0002).

\bibliographystyle{siamplain}
\bibliography{references}

\end{document}